\newcommand{\ol}[1]{\overline{\vphantom{1}#1}}
\newcommand{\I}{\infty}
\theoremstyle{Theorem}
\newtheorem{thm}{Theorem}[section]
\newtheorem{cor}[thm]{Corollary}
\newtheorem{lem}[thm]{Lemma}
\theoremstyle{definition}
\newtheorem{dff}[thm]{Definition}
\title{Explicit Hilbert-Kunz functions of $2 \times 2$ determinantal rings}
\author{Marcus Robinson and Irena Swanson}
\date{}
\begin{document}

\maketitle

\begin{abstract}
\noindent
Let $k[X] = k[x_{i,j}: i = 1, \ldots, m; j = 1, \ldots, n]$
be the polynomial ring in $m n$ variables $x_{i,j}$
over a field $k$ of arbitrary characteristic.
Denote by $I_2(X)$ the ideal generated by the $2 \times 2$ minors of 
the generic $m \times n$ matrix $[x_{i,j}]$.
We give a closed formulation for the dimensions of the $k$-vector space
$k[X]/(I_2(X) + (x_{1,1}^q, \ldots, x_{m,n}^q))$
as $q$ varies over all positive integers,
i.e.,
we give a closed form for the generalized Hilbert-Kunz function
of the determinantal ring $k[X]/I_{2}[X]$.
We also give a closed formulation of dimensions
of related quotients of $k[X]/I_{2}[X]$.
In the process we establish a formula
for the numbers of some compositions (ordered partitions of integers),
and we give a proof of a new binomial identity.
\end{abstract}

\section{Introduction}

\def\length{\hbox{length}\,}

Throughout let $m, n, q$ be non-negative integers
and $k$, $k[X]$, and $I_2(X)$ as in the abstract.
The generalized Hilbert-Kunz function
of $R = k[X]/I_{2}[X]$
is the function $\mbox{HK}_{R,X} : \mathbb{N} \to \mathbb{N}$ given by
$$
\mbox{HK}_{R,X}(q)
= \length \displaystyle\left( \frac {R} {x_{1,1}^q, \ldots, x_{m,n}^q) }\right)
= \length \displaystyle\left( \frac {k[X]} {I_{2}(X) + (x_{1,1}^q, \ldots, x_{m,n}^q) }\right).
$$
The standard Hilbert-Kunz function is only defined
when $k$ has positive prime characteristic~$p$
and when $q$ varies over powers of $p$,
whereas the generalized Hilbert-Kunz function
is defined for arbitrary field $k$, regardless of the characteristic.
While the Hilbert-Kunz function is not necessarily a polynomial function,
it has a well-defined normalized leading coefficient.
The normalized leading coefficient of the generalized Hilbert-Kunz function
has been studied by several authors,
including
Conca \cite{Con96},
Eto \cite{Eto02},
Eto and Yoshida \cite{EY03},
while
Miller and Swanson \cite{MS}
studied the whole generalized Hilbert-Kunz function.
Miller and Swanson 
gave a recursive formulation for $\mbox{HK}_{R,X}$
and proved that it is a polynomial function.
They gave closed formulations in the case $m \leq 2$.
This paper is an extension of~\cite{MS}.

The main result of this paper, Theorem~\ref{thmNinfinf},
is the closed formulation of
$\mbox{HK}_{R,X}$
for arbitrary positive integers $m, n$.
We also give,
in Theorem~\ref{thm:nq},
an explicit length of
$$
\frac{k[X]} {I_{2}(X) + (x_{i,j}^{q} : i ,j)
+ \sum_{j=1}^{n} (x_{1,j}, \ldots, x_{m,j})^q }.
$$
In Lemma~\ref{lemWU} and Corollary~\ref{corcompid}
we give some explicit formulas for the number of
tuples of specific length
of non-negative integers that sum up to at most a fixed number
and whose first few entries are at most another fixed number.
(In other words,
we give formulas for the numbers of some specific compositions of integers.)

\section{Set-Up}

Our proofs are based on the following result from \cite{MS}:

\begin{thm}\label{thmvsbasis}
\cite[Theorem 2.4]{MS}
A k-vector space basis for $k[X]/(I_{2} + (x_{1,1}^q, \ldots, x_{m,n}^q))$
consists of monomials $\prod_{i,j} x_{i,j}^{p_{i,j}}$
with the following properties:
\begin{enumerate}
\item
Whenever $p_{i,j} >0$ and $i' <i, j <j'$,
then $p_{i',j'} = 0$.
(Monomials satisfying this property will be called {\bf staircase monomials}.
The name comes from the southwest-northeast staircase-like shape
of the non-zero $p_{i,j}$ in the $m\times n$ matrix of all $p_{i,j}$.)
\item
Either for all $i = 1, \ldots, m,$ $\sum_{j} p_{i,j} < q$,
or for all $j =1, \ldots, n,$ $\sum_{i} p_{i,j} < q$.
\end{enumerate}
\qed
\end{thm}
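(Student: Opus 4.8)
The plan is to realize $R = k[X]/I_2(X)$ as a Segre product and read off both properties from the monomial structure of that ring. Introduce the $k$-algebra map $\phi \colon k[X] \to k[u_1,\dots,u_m,v_1,\dots,v_n]$ determined by $x_{i,j}\mapsto u_i v_j$. A monomial $\prod_{i,j}x_{i,j}^{p_{i,j}}$ is sent to $u^{r}v^{c}$, where $r_i=\sum_j p_{i,j}$ and $c_j=\sum_i p_{i,j}$ are its row and column sums, so $|r|=|c|=\deg$. The kernel of $\phi$ is exactly $I_2(X)$ in every characteristic (classically, the ideal of the Segre variety is generated by the $2\times2$ minors and is prime), so $\phi$ induces an isomorphism of $R$ onto its image $S=\operatorname{im}\phi$, the affine semigroup ring spanned by the monomials $u^{r}v^{c}$ with $|r|=|c|$.

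First I would settle property (1) by a dimension count rather than by straightening. The degree-$d$ piece $S_d$ has the monomials $u^{r}v^{c}$ with $|r|=|c|=d$ as a $k$-basis, so $\dim_k R_d=\dim_k S_d=\binom{d+m-1}{m-1}\binom{d+n-1}{n-1}$, the number of pairs of margins. The key combinatorial lemma is that the map sending a matrix $(p_{i,j})$ to its margins $(r,c)$ restricts to a bijection from staircase matrices onto all margin pairs with $|r|=|c|$: given $(r,c)$, the northwest-corner rule produces a matrix with those margins whose support is a monotone northwest-to-southeast path, which is precisely the staircase shape of property (1), and it is the unique such matrix. Consequently the staircase monomials of degree $d$ inject into $R_d$ (their $\phi$-images $u^{r}v^{c}$ are distinct monomials, hence linearly independent), and their number equals $\dim_k R_d$; so they form a $k$-basis of $R$, giving property (1).

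Next I would add the powers. Under $\phi$ the ideal $(x_{1,1}^q,\dots,x_{m,n}^q)$ corresponds to $J=(u_i^q v_j^q : i,j)$, and since each generator is a monomial of $S$, the ideal $J$ is spanned by monomials of $S$; thus $S/J\cong R/(I_2(X)+(x_{1,1}^q,\dots,x_{m,n}^q))$ has as a $k$-basis the monomials $u^{r}v^{c}$ of $S$ not lying in $J$. A monomial $u^{r}v^{c}$ lies in $u_i^q v_j^q S$ exactly when $r_i\ge q$ and $c_j\ge q$, so $u^{r}v^{c}\in J$ if and only if $r_i\ge q$ for some $i$ \emph{and} $c_j\ge q$ for some $j$. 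The surviving monomials are therefore those with all $r_i<q$ \emph{or} all $c_j<q$, which is precisely property (2) (and this already forces every $p_{i,j}<q$). Transporting back through $\phi$ and the bijection of the previous step identifies the basis with the staircase monomials satisfying (2).

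The main obstacle is the combinatorial lemma of the second paragraph: proving that the margins determine a unique staircase filling and that the northwest-corner construction lands exactly in the staircase class. Once that bijection is in hand the two enumerations match term by term and both properties drop out; the remaining points — that $\ker\phi=I_2(X)$ characteristic-freely, and that $J$ is a monomial ideal of the semigroup ring $S$ — are standard, and the usually delicate step of showing the candidate monomials span (a straightening-law argument) is bypassed entirely by the equality of dimensions.
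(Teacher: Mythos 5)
Your proposal is correct, but it is worth noting that this paper does not actually prove Theorem~\ref{thmvsbasis} at all: it is imported verbatim from \cite[Theorem 2.4]{MS}, where the standard route is a Gr\"obner-basis/straightening argument (the $2\times 2$ minors form a Gr\"obner basis, the staircase monomials are the standard monomials of the initial ideal, and one then tracks the images of the $x_{i,j}^q$). Your Segre-product argument is a genuinely different and self-contained alternative. Its two pillars are sound: (i) $\ker\phi=I_2(X)$ characteristic-freely, and (ii) the margin map is a bijection from staircase matrices of degree $d$ onto pairs $(r,c)$ with $|r|=|c|=d$, so that the $\binom{d+m-1}{m-1}\binom{d+n-1}{n-1}$ staircase monomials have distinct, hence independent, images and must span by the dimension count. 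Point (ii) is not something you need to build from scratch -- it is exactly the paper's Lemma~\ref{lemstairmargs} (existence and uniqueness of the staircase filling with prescribed margins, proved by forcing the corner entry to be a minimum of a row and column sum and inducting); the only cosmetic discrepancy is that you peel off the northwest corner $p_{1,1}=\min\{r_1,c_1\}$ while the paper peels off $p_{m,1}=\min\{c_1,r_m\}$, a mirror image reflecting a mild ambiguity in the paper between the literal inequality $i'<i$, $j<j'$ and the advertised ``southwest--northeast'' shape; either convention yields the same counts. Your handling of the powers is also right: $J=(u_i^qv_j^q)$ is spanned by monomials of the semigroup ring $S$, a monomial $u^rv^c$ of $S$ lies in $u_i^qv_j^qS$ exactly when $r_i\ge q$ and $c_j\ge q$, so the surviving basis is cut out by the condition ``all $r_i<q$ or all $c_j<q$,'' which is property (2). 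What your approach buys is the elimination of the delicate spanning step; what it costs is reliance on the classical fact that the Segre ideal equals $I_2(X)$ in every characteristic, which is itself usually proved by a straightening or Gr\"obner argument, so the two routes are not as far apart as they first appear.
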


Thus to compute the Hilbert-Kunz function,
we need to be able to count such monomials.
The recursive formulations for this function in \cite{MS},
as well as the explicit formulations below,
require counting related sets of monomials:

\begin{dff}\label{defNq}
\cite[Section 3]{MS}
Let $r_{1}, \ldots, r_{m}, c_{1}, \ldots,c_{n} \in \mathbb{N} \cup \{\infty\}$.
(In general we think of the $r_i$ as the row sums
and the $c_j$ as the column sums.)
Define $N_{q}(m,n;r_{1}, \ldots, r_{m}; c_{1}, \ldots, c_{n})$
to be the number of monomials $\prod_{i,j}x_{i,j}^{p_{i,j}}$
with the following properties:
\begin{enumerate}
\item
$\prod_{i,j}x_{i,j}^{p_{i,j}}$ is a staircase monomial,
i.e.,
whenever $p_{i,j} >0$ and $i' <i, j <j'$,
then $p_{i',j'} = 0$.
\item
For all $i \in \{1,\ldots, m\},$ $\sum_{j}p_{i,j} \leq r_{i}$,
and for all $j \in \{1,\ldots,n\},$ $\sum_{i}p_{i,j} \leq c_{j}$.
\item
Either for all $i \in \{1,\ldots, m\},$ $\sum_{j}p_{i,j} < q$,
or for all $j \in \{1,\ldots,n\},$ $\sum_{i}p_{i,j} < q$.
\end{enumerate}

For ease of notation let $\overline{c}$ denote $\{c, c, \ldots,c \}$.
For example,
$N_{q}(m,n; \I,\ldots, \I;\I, \ldots, \I)= N_{q}(m,n; \overline{\I}, \overline{\I})$.
By convention, $N_{q}(0,n;;c_{1}, \ldots, c_{n}) =1$.
\end{dff}

It was proved in \cite[Section 3]{MS} that
$N_q(m,n;r_1, \ldots, r_m; c_1, \ldots, c_n)$ equals
$$
\length \left(
{k[X] \over
I_2(X) + (x_{i,j}^q: i, j) + \sum_{i=1}^m (x_{i,1}, \ldots, x_{i,n})^{r_i + 1}
+ \sum_{j=1}^n (x_{1,j}, \ldots, x_{m,j})^{c_j + 1}
}\right),
$$
where for an ideal $I$,
we set $I^{\infty}$ to be the $0$ ideal.
Thus in particular
$N_{q}(m,n; \overline{\I};\overline{\I})
= \mbox{HK}_{K[X]/I_{2}(X), X} (q)$.

Our main result relies on the 
count of the following monomials as well:

\begin{dff}\label{defMq}
Let $r_{1}, \ldots, r_{m}, c_{1}, \ldots,c_{n} \in \mathbb{N} \cup \{\infty\}$.
Define $M_{q}(m,n; r_{1}, \ldots, r_{m}; c_{1}, \ldots, c_{n})$
to be the number of monomials $\prod_{i,j}x_{i,j}^{p_{i,j}}$ such that
\begin{enumerate}
\item
$\prod_{i,j}x_{i,j}^{p_{i,j}}$ is a staircase monomial,
i.e.,
whenever $p_{i,j} >0$ and $i' <i, j <j'$,
then $p_{i',j'} = 0$.
\item
For all $i \in \{1,\ldots, m\},$ $\sum_{j}p_{i,j} \leq \min\{r_i,q-1\}$.
\item
There exists $j \in \{1,\ldots,n\}$ such that $\sum_{i}p_{i,j} > c_{j}$.
\end{enumerate}
\end{dff}

The following lemma says that $m n$ exponents $p_{i,j}$ of a staircase monomial
can be identified by $m+n$ or even $m+n-1$ numbers:

\begin{lem}\label{lemstairmargs}
Suppose that $r_1, \ldots, r_m, c_1, \ldots, c_n$ are non-negative integers
and that $\sum_i r_i = \sum_j c_j$.
Then there exists a unique 
staircase monomial $\prod_{i,j} x_{i,j}^{p_{i,j}}$
such that for all $i = 1, \ldots, m$,
$r_i = \sum_j p_{i,j}$
and that
for all $j = 1, \ldots, n$,
$c_j = \sum_i p_{i,j}$.
\end{lem}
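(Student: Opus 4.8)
The plan is to prove existence and uniqueness simultaneously by induction on $m+n$, carrying the hypothesis $\sum_i r_i = \sum_j c_j$ through every step. For the base case I would take $m=0$ (the case $n=0$ being symmetric): the sum hypothesis forces $\sum_j c_j = 0$, hence every $c_j=0$, and the unique staircase monomial is the empty product $1$, consistent with the convention $N_q(0,n;\,;c_1,\dots,c_n)=1$.

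For the inductive step I would focus entirely on the top-left corner $(1,1)$ and show that the staircase condition together with the margin constraints forces the value $p_{1,1}$ and empties either the rest of row $1$ or the rest of column $1$. The combinatorial fact I would extract first is an immediate consequence of the defining implication: if $p_{i,1}>0$ for some $i\ge 2$, then taking the nonzero entry at $(i,1)$ (so $j=1$), together with $i'=1<i$ and any $j'\ge 2$, yields $p_{1,j'}=0$; thus one cannot simultaneously have a nonzero entry strictly below $(1,1)$ in column $1$ and a nonzero entry strictly right of $(1,1)$ in row $1$. I then split into two cases. If $r_1\le c_1$, assuming some $p_{1,j}>0$ with $j\ge 2$ kills all $p_{i,1}$ with $i\ge 2$, so column $1$ is fed only by $p_{1,1}\le r_1\le c_1$; forcing the column sum to equal $c_1$ gives $p_{1,1}=r_1=c_1$, which exhausts row $1$ at the corner and contradicts $p_{1,j}>0$. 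Hence row $1$ sits entirely at $(1,1)$ with $p_{1,1}=r_1$, and deleting row $1$ while replacing $c_1$ by $c_1-r_1\ge 0$ produces a smaller instance still satisfying the sum hypothesis. The case $r_1>c_1$ is the mirror image: the same fact forces column $1$ to sit entirely at $(1,1)$ with $p_{1,1}=c_1$, and I delete column $1$ and replace $r_1$ by $r_1-c_1>0$.

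Both directions then follow from the induction hypothesis: prepending the forced row $1$ (or column $1$) to the unique staircase filling of the reduced instance reproduces the prescribed margins and remains a staircase, since the corner $(1,1)$ can never be the northeast member of a forbidden pair; conversely, any staircase filling restricts, after removing the forced corner data, to a staircase filling of the reduced instance, which is unique by induction. The main obstacle, and the only place demanding real care, is this forcing argument at the corner: confirming that the staircase condition genuinely forbids spreading mass away from $(1,1)$ in the heavier direction, and checking that the reduced margins stay non-negative with equal sums. Once that corner analysis is pinned down, the induction (each step lowering $m+n$ by one) closes at once, and the degenerate sub-cases ($r_1=c_1$, or a vanishing margin, which simply delete an empty row or column) need no separate treatment.
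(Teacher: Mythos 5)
Your proof is correct and is, up to a reflection, the same argument as the paper's: both force the value of one corner entry from the margins ($p=\min\{\text{row margin},\text{column margin}\}$) and then recurse on the array with one row or one column deleted, checking that the reduced margins stay non-negative with equal sums. The one genuine point of divergence is \emph{which} corner is forced. You work at $(1,1)$ and deduce $p_{1,1}=\min\{r_1,c_1\}$; the paper works at $(m,1)$ and deduces $p_{m,1}=\min\{r_m,c_1\}$. These are not interchangeable, because the forced corner depends on the orientation of the staircase condition. Reading the condition exactly as printed in Theorem~\ref{thmvsbasis} ($p_{i,j}>0$ together with $i'<i$ and $j<j'$ forces $p_{i',j'}=0$, i.e.\ a nonzero entry annihilates everything strictly above and to its right), your deduction is the valid one: a nonzero $p_{i,1}$ with $i\ge 2$ kills all $p_{1,j'}$ with $j'\ge 2$, so $(1,1)$ is determined, whereas $(m,1)$ is not --- for $m=n=2$ with all margins equal to $1$, the unique staircase monomial under the printed condition is $x_{1,1}x_{2,2}$, which has $p_{2,1}=0\ne\min\{r_2,c_1\}$. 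The paper's choice of corner matches instead the mirror convention suggested by the phrase ``southwest--northeast staircase'' (a nonzero entry annihilates everything strictly above and to its \emph{left}). So your argument is sound relative to the stated definition, and the remaining ingredients --- the case split $r_1\le c_1$ versus $r_1>c_1$, the observation that the corner can never be a member of a forbidden pair, and the induction on $m+n$ giving both existence and uniqueness --- are all in order; the paper's proof is the same argument reflected to the opposite corner.
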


\begin{proof}
If $m = 1$,
the clearly $p_{1,j} = c_j$,
which is uniquely determined.
If $n = 1$,
necessarily $p_{i,1} = r_i$.

In general,
for arbitrary $m$ and $n$,
knowing $c_1$ and $r_m$ is enough information to uniquely determine $p_{m,1}$:
If $p_{m,1} < \min\{c_1, r_m\}$,
then the $m$th row has a non-zero number beyond the first entry
and the first column has a non-zero number in the first $m-1$ rows,
which then makes the corresponding monomial non-staircase
and is not allowed.
So necessarily $p_{m,1} = \min\{c_1, r_m\}$.
If $p_{m,1} = c_1$,
then no more non-zero exponents appear in the first column,
and it remains to fill in the remaining $m \times (n-1)$ matrix of $p_{i,j}$
with the remaining numbers
$r_1, \ldots, r_{m-1}, r_m - c_1, c_2, \ldots, c_n$.
If instead $p_{m,1} = r_m$,
then no more non-zero exponents appear in the last row,
and it remains to fill in the remaining $(m-1) \times n$ matrix of $p_{i,j}$
with the remaining numbers
$r_1, \ldots, r_{m-1}, c_1 - r_m, c_2, \ldots, c_n$.
\end{proof}

\begin{lem}\label{lemWU}
Let $a, b, w, z$ be integers with $a \le b$.
The number of $b$-tuples of non-negative integers
that sum up to at most $w$
and for which the first $a$ entries are strictly smaller than~$z$
equals
$$
\sum_{i=0}^a (-1)^i \binom a i \binom{w-iz + b} b.
$$
\end{lem}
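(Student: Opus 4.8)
The plan is to prove the formula by inclusion--exclusion on the upper-bound constraints, after first recording the basic unconstrained count. The number of $b$-tuples $(x_1, \ldots, x_b)$ of non-negative integers with $x_1 + \cdots + x_b \le w$ equals $\binom{w+b}{b}$: this is the standard ``stars and bars'' count, obtained by adjoining a slack variable $x_{b+1} \ge 0$ so that the inequality becomes the equality $x_1 + \cdots + x_{b+1} = w$, whose number of non-negative solutions is $\binom{w+b}{b}$ (and which is $0$ when $w < 0$). I will use throughout the convention that $\binom{N}{b}$ denotes this number of solutions, so that $\binom{N}{b} = 0$ whenever $N < b$; equivalently, each symbol $\binom{w - iz + b}{b}$ is to be read as $0$ as soon as $w - iz < 0$.

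Next I would set up the sieve. For $j = 1, \ldots, a$ let $B_j$ be the set of tuples counted above (sum $\le w$) that additionally violate the bound in coordinate $j$, i.e.\ with $x_j \ge z$. The tuples we wish to count are exactly those lying in none of $B_1, \ldots, B_a$, so by inclusion--exclusion their number is $\sum_{T \subseteq \{1,\ldots,a\}} (-1)^{|T|} \bigl|\bigcap_{j \in T} B_j\bigr|$, the intersection being taken inside the set of all sum-$\le w$ tuples. For a fixed $T$ with $|T| = i$, the substitution $x_j \mapsto x_j - z$ for $j \in T$ (and $x_j \mapsto x_j$ otherwise) is a bijection onto the non-negative $b$-tuples with sum at most $w - iz$, so $\bigl|\bigcap_{j \in T} B_j\bigr| = \binom{w - iz + b}{b}$. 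Since this depends only on $i = |T|$, and there are $\binom{a}{i}$ subsets of each size $i$, grouping the terms yields $\sum_{i=0}^a (-1)^i \binom{a}{i} \binom{w - iz + b}{b}$, which is the claimed expression.

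The only real subtlety --- the step I would be most careful about --- is the treatment of the terms with $w - iz < 0$, for which the corresponding intersection is empty and must contribute $0$; this is exactly what the convention above guarantees, and it is the reason the formula should be read with $z \ge 1$, the case relevant here. A clean way to package both the main computation and the vanishing of these terms is through generating functions: each of the first $a$ coordinates contributes $1 + t + \cdots + t^{z-1} = (1 - t^z)/(1-t)$, while each remaining coordinate and the slack variable contribute $1/(1-t)$, so the tuples we want are enumerated by the coefficient of $t^w$ in $\dfrac{(1 - t^z)^a}{(1-t)^{b+1}}$. Expanding $(1 - t^z)^a = \sum_i (-1)^i \binom{a}{i} t^{iz}$ and $(1-t)^{-(b+1)} = \sum_{k \ge 0} \binom{k+b}{b} t^k$ and reading off the coefficient of $t^w$ reproduces the formula, with the negative-budget terms automatically absent since the second series has no negative powers of $t$. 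I expect no genuine obstacle beyond this bookkeeping; the heart of the argument is the single shift-by-$z$ bijection feeding standard inclusion--exclusion.
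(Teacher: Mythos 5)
Your proof is correct and rests on the same key step as the paper's: the shift $x_j \mapsto x_j - z$ on a chosen $i$-element set of ``violating'' coordinates, giving $\binom{w-iz+b}{b}$ for each such set. The paper merely packages the inclusion--exclusion differently (it counts pairs of an $l$-subset and a tuple exceeding $z$ on that subset, then inverts the resulting triangular system $[\binom{j}{i}]$, which is binomial inversion, i.e.\ inclusion--exclusion in matrix form), so your direct sieve --- and your explicit handling of the $w - iz < 0$ terms, which the paper leaves implicit --- is essentially the same argument, with the generating-function remark as a pleasant extra.
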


\proof
Set
$U_{w,l} = \{(v_1, \ldots, v_b) \in \mathbb N^b:
\sum_{i=1}^b v_i \le w, \mbox{ and }
l \le |\{i \le a: v_i \ge z\}|\}$.
The desired number is $|U_{w,0} \setminus U_{w,1}|$.

Certainly $|U_{w,0}| = \binom{w+b}b$,
which is the number of ways of positioning up to $w$ stones into $b$ boxes
(possibly more than one stone per box).

Let $(v'_1, \ldots, v'_b) \in U_{w-lz,0}$
and let $L$ be an $l$-element subset of $\{1, \ldots, a\}$.
We construct $(v_1, \ldots, v_b) \in \mathbb{N}^b$
such that $v_i = v'_i + z$ if $i \in L$
and otherwise $v_i = v'_i$.
Note that $(v_1, \ldots, v_b) \in U_{w,l}$.
Every element $(v_1, \ldots, v_b)$ of $U_{w,l}$ is obtained in this way,
but note that for $k \ge l$,
each element $(v_1, \ldots, v_b)$ of $U_{w,k} \setminus U_{w,k+1}$
is obtained in this way in exactly $\binom k l$ ways.
This proves that
$$
\sum_{k=l}^a \binom k l |U_{w,k} \setminus U_{w,k+1}|
= \binom a l |U_{w-lz,0}|
= \binom a l \binom {w-lz + b} b.
$$
This gives a system of $a+1$ linear equations
whose matrix form is
$$
\begin{bmatrix}
\binom 0 0 & \binom 1 0 & \binom 2 0 & \cdots & \binom a 0 \cr
\binom 0 1 & \binom 1 1 & \binom 2 1 & \cdots & \binom a 1 \cr
\binom 0 2 & \binom 1 2 & \binom 2 2 & \cdots & \binom a 2 \cr
\vdots & \vdots & \vdots & \ddots & \vdots \cr
\binom 0 a & \binom 1 a & \binom 2 a & \cdots & \binom a a \cr
\end{bmatrix}
\begin{bmatrix}
\vphantom{\binom 0 0} |U_{w,0} \setminus U_{w,1}| \cr
\vphantom{\binom 0 0} |U_{w,1} \setminus U_{w,2}| \cr
\vphantom{\binom 0 0} |U_{w,2} \setminus U_{w,3}| \cr
\vdots \cr
\vphantom{\binom 0 0} |U_{w,a} \setminus U_{w,a+1}| \cr
\end{bmatrix}
=
\begin{bmatrix}
\binom a 0 \binom {w-0z + b} b \cr
\binom a 1 \binom {w-1z + b} b \cr
\binom a 2 \binom {w-2z + b} b \cr
\vdots \cr
\binom a a \binom {w-az + b} b \cr
\end{bmatrix}.
$$
The $(a+1) \times (a+1)$ matrix above is upper triangular
with entries $1$ on the diagonal.
It is easy to see that the inverse of this matrix $[\binom{j-1}{i-1}]_{i,j}$
is
$[(-1)^{i+j}\binom{j-1}{i-1}]_{i,j}$.
Thus by Cramer's rule,
$$
|U_{w,0} \setminus U_{w,1}|
= \sum_{i=1}^{a+1} (-1)^{i-1} \binom a {i-1} \binom{w-(i-1)z + b} b
= \sum_{i=0}^a (-1)^i \binom a i \binom{w-iz + b} b.
\qed
$$

\section{Main theorems}

In this section we give explicit formulas for
$N_{q}(m,n;\overline{\I};\overline{q-1})$,
$M_{q}(m,n;\overline{q-1};\overline{q-1})$,
and
$N_{q}(m,n;\overline{\I};\overline{\I})$
for arbitrary positive integers $m, n$.

\begin{thm}\label{thm:nq}
For all $m, n$,
the length of $\displaystyle \frac{k[X]} {I_{2}(X) + (x_{i,j}^{q} : i ,j)
+ \sum_{j=1}^{n} (x_{1,j}, \ldots, x_{m,j})^q }$
equals
$$
N_{q}(m,n; \ol{\infty};\ol{q-1})
= \sum_{i=1}^{n} (-1)^{n-i} \binom {n} {i} \binom {iq +m-1} {m+n-1},
$$
and furthermore,
this number equals the number of $(m+n-1)$-tuples
of non-negative integers that sum up to at most $n(q-1)$
and for which the first $n$ entries are strictly smaller than $q$.
\end{thm}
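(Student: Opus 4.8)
The plan is to reduce the statement to the two earlier lemmas: Lemma~\ref{lemstairmargs}, to re-encode staircase monomials by their row and column sums, and Lemma~\ref{lemWU}, to evaluate the resulting count. First I would unwind Definition~\ref{defNq} in the case of arguments $\ol{\infty};\ol{q-1}$. Since every column sum is required to be at most $q-1 < q$, condition (3) of Definition~\ref{defNq} holds automatically, and with $r_i = \infty$ the row part of condition (2) is vacuous. Thus $N_q(m,n;\ol{\infty};\ol{q-1})$ is simply the number of staircase monomials $\prod_{i,j} x_{i,j}^{p_{i,j}}$ whose column sums $c_j := \sum_i p_{i,j}$ all satisfy $c_j \le q-1$, with no constraint on the row sums $r_i := \sum_j p_{i,j}$ beyond the staircase condition. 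By Lemma~\ref{lemstairmargs}, such monomials are in bijection with margin tuples $(r_1, \ldots, r_m; c_1, \ldots, c_n)$ of non-negative integers satisfying $\sum_i r_i = \sum_j c_j$ together with $c_j \le q-1$ for each $j$.

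The key step is a change of variables that turns this balance-constrained margin count into an unconstrained count governed by a single inequality. I would send each margin tuple to the $(m+n-1)$-tuple $(q-1-c_1, \ldots, q-1-c_n, r_1, \ldots, r_{m-1})$, discarding the now-redundant last row sum $r_m$. Each complemented entry $q-1-c_j$ lies in $\{0, \ldots, q-1\}$, so the first $n$ entries are strictly smaller than $q$, the remaining entries $r_1, \ldots, r_{m-1}$ are non-negative, and the total equals $n(q-1) - \sum_i c_j + \sum_{i<m} r_i = n(q-1) - r_m \le n(q-1)$, using $\sum_j c_j = \sum_i r_i$. Conversely, from any $(m+n-1)$-tuple $(d_1, \ldots, d_n, s_1, \ldots, s_{m-1})$ of non-negative integers with $d_j < q$ and total at most $n(q-1)$, I recover $c_j = q-1-d_j$, $r_i = s_i$ for $i < m$, and $r_m = n(q-1) - \sum_j d_j - \sum_{i<m} s_i$; this $r_m$ is non-negative exactly because the total is at most $n(q-1)$, and by construction it restores $\sum_i r_i = \sum_j c_j$. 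These two assignments are mutually inverse, so $N_q(m,n;\ol{\infty};\ol{q-1})$ equals the number of $(m+n-1)$-tuples of non-negative integers summing to at most $n(q-1)$ whose first $n$ entries are smaller than $q$, which is the ``furthermore'' clause.

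It then remains to produce the closed form. I would invoke Lemma~\ref{lemWU} with $b = m+n-1$, $w = n(q-1)$, $a = n$, and $z = q$, which gives $\sum_{i=0}^n (-1)^i \binom{n}{i} \binom{n(q-1)-iq+m+n-1}{m+n-1}$. Simplifying the upper entry to $(n-i)q+m-1$ and reindexing via $i \mapsto n-i$ converts this to $\sum_{i=0}^n (-1)^{n-i}\binom{n}{i}\binom{iq+m-1}{m+n-1}$; the $i=0$ term vanishes because $\binom{m-1}{m+n-1}=0$ for $n \ge 1$, leaving precisely the asserted sum $\sum_{i=1}^n (-1)^{n-i}\binom{n}{i}\binom{iq+m-1}{m+n-1}$.

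The main obstacle, and the only genuinely nonobvious move, is the middle paragraph: recognizing that complementing the column sums by $c_j \mapsto q-1-c_j$ is exactly what trades the balance equality $\sum_i r_i = \sum_j c_j$ (together with $r_m \ge 0$) for the single upper-bound inequality that Lemma~\ref{lemWU} is built to count. Everything else is bookkeeping, and I expect the verification that the two maps are inverse, and the binomial simplification, to be routine once the correct encoding is in place.
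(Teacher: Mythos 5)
Your proposal is correct and follows essentially the same route as the paper: encode each staircase monomial by its margins via Lemma~\ref{lemstairmargs}, complement the column sums to $q-1-c_j$, drop one redundant row sum to land in the set counted by Lemma~\ref{lemWU}, and reindex the resulting alternating sum. The only (immaterial) difference is that you discard $r_m$ where the paper discards $r_1$.
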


\proof
Let $T_{m,n,q}$ be the set of all staircase monomials
$\prod_{i,j} x_{i,j}^{p_{i,j}}$
such that for all $j = 1, \ldots, n$,
$\sum_i p_{i,j} < q$.
By \cite[Section 3]{MS},
$$
|T_{m,n,q}| = N_{q}(m,n;\overline{\I};\overline{q-1})
= \length \left(\displaystyle \frac{k[X]} {I_{2}(X) + (x_{i,j}^{q} : i ,j)
+ \sum_{j=1}^{n} (x_{1,j}, \ldots, x_{m,j})^q }\right).
$$
Let $W$ be the set of $(m+n-1)$-tuples
of non-negative integers that sum up to at most $n(q-1)$
and for which the first $n$ entries are strictly smaller than $q$.
Define $f : T_{m,n,q} \to W$
by
$$
f( \prod_{i,j} x_{i,j}^{p_{i,j}})
= (\tilde c_{1}, \ldots \tilde c_{n},
r_{2}, r_{3}, \ldots, r_{m}),
$$
where 
$\tilde c_{j} = (q-1) - \sum_{i} p_{i,j}$,
and $r_{i} = \sum_{j} p_{i,j}$.
First of all,
by assumption all $\tilde c_j$ are strictly smaller than $q$,
and
\begin{align*}
\sum_{i = 1}^{n} \tilde c_i + \sum_{j = 2}^{m} r_{j}
&= n(q-1) - \displaystyle\sum_{i,j} p_{i,j} + \sum_{i\neq 1,j} p_{i,j}  \\
&= n(q-1) - \sum_{j} p_{1,j} \\
& \leq n(q-1).
\end{align*}
Thus the image of $f$ is in $W$.
An $(n+m-1)$-tuple
$(\tilde c_{1}, \ldots \tilde c_{n},
r_{2}, r_{3}, \ldots, r_m) \in W$
uniquely identifies non-negative integers
$c_1, \ldots, c_n, r_2, \ldots, r_m$.
Furthermore,
$$
r_1 = \sum_{j=1}^n c_j - \sum_{i=2}^m r_i
= \sum_{j=1}^n (q-1-\tilde c_j) - \sum_{i=2}^m r_i
= n(q-1) - (\sum_{j=1}^n \tilde c_j + - \sum_{i=2}^m r_i)
\ge 0,
$$
so that
$(\tilde c_{1}, \ldots \tilde c_{n},
r_{2}, r_{3}, \ldots, r_m)$
uniquely identifies non-negative integers
$c_1, \ldots, c_n, r_1, \ldots, r_m$,
whence
by Lemma~\ref{lemstairmargs} it uniquely identifies
the staicase monomial $\prod_{i,j} x_{i,j}^{p_{i,j}}$.
Thus $f$ is injective and surjective,
so that by Lemma~\ref{lemWU},
\begin{align*}
N_{q}(m,n; \ol{\infty};\ol{q-1})
&= |W| \cr
&=
\sum_{i=0}^{n-1} (-1)^i \binom {n} {i} \binom {n(q-1) - iq + m + n -1} {m+n-1}
\cr
&=
\sum_{i=0}^{n-1} (-1)^i \binom {n} {n-i} \binom {(n-i)q + m -1} {m+n-1}
\cr
&=
\sum_{i=1}^{n} (-1)^{n-i} \binom {n} {i} \binom {iq +m-1} {m+n-1}.
\hfill\qed
\end{align*}

\begin{thm}\label{thm:mq}
For all positive integers $m, n$,
$$
M_{q}(m,n;\overline{q-1};\overline{q-1})
= \sum_{i=1}^n \sum_{j=0}^m (-1)^{m-j + i-1}
\binom n i \binom m j \binom{jq-iq + n - 1}{m+n-1}.
$$
\end{thm}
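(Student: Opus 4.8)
The plan is to obtain $M_{q}(m,n;\ol{q-1};\ol{q-1})$ by complementary counting against two $N_{q}$-values that are already within reach of Theorem~\ref{thm:nq}. By Definition~\ref{defMq}, $M_{q}(m,n;\ol{q-1};\ol{q-1})$ counts the staircase monomials all of whose row sums are at most $q-1$ and at least one of whose column sums is at least $q$. Every staircase monomial with all row sums at most $q-1$ lies in exactly one of two disjoint classes: those with all column sums at most $q-1$, and those with some column sum at least $q$. The first class is counted by $N_{q}(m,n;\ol{q-1};\ol{q-1})$, while the whole ambient set is counted by $N_{q}(m,n;\ol{q-1};\ol{\I})$ (here condition (3) of Definition~\ref{defNq} is automatic, the row sums already being below $q$). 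Hence
$$M_{q}(m,n;\ol{q-1};\ol{q-1}) = N_{q}(m,n;\ol{q-1};\ol{\I}) - N_{q}(m,n;\ol{q-1};\ol{q-1}),$$
and it remains to evaluate the two terms on the right.

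For the first term I would invoke the symmetry that exchanges rows with columns. Defining $q_{a,b}=p_{\,m+1-b,\;n+1-a}$ for $a=1,\dots,n$ and $b=1,\dots,m$ gives an involutive bijection from staircase monomials on the $m\times n$ grid to staircase monomials on the $n\times m$ grid that carries row sums to column sums (in reversed order) and vice versa; because the bounds $\ol{q-1}$ and $\ol{\I}$ are uniform, the reversal is immaterial, and so $N_{q}(m,n;\ol{q-1};\ol{\I}) = N_{q}(n,m;\ol{\I};\ol{q-1})$. Theorem~\ref{thm:nq}, with $m$ and $n$ interchanged, then evaluates this as $\sum_{j=1}^{m}(-1)^{m-j}\binom{m}{j}\binom{jq+n-1}{m+n-1}$. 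For the second term I would apply Lemma~\ref{lemstairmargs}: a staircase monomial with all margins in $\{0,\dots,q-1\}$ corresponds bijectively to a margin vector $(r_1,\dots,r_m,c_1,\dots,c_n)\in\{0,\dots,q-1\}^{m+n}$ with $\sum_i r_i=\sum_j c_j$. Reflecting the column entries via $c_j\mapsto q-1-c_j$ converts the balance condition into a single-sum condition, so that $N_{q}(m,n;\ol{q-1};\ol{q-1})$ equals the number of $(m+n)$-tuples in $\{0,\dots,q-1\}^{m+n}$ summing to $n(q-1)$. Expanding $\left(\frac{1-x^{q}}{1-x}\right)^{m+n}=(1-x^q)^m(1-x^q)^n(1-x)^{-(m+n)}$ and reading off the coefficient of $x^{n(q-1)}$ (the bounded stars-and-bars count behind Lemma~\ref{lemWU}) presents this number as the double sum $\sum_{i=0}^{n}\sum_{j=0}^{m}(-1)^{i+j}\binom{n}{i}\binom{m}{j}\binom{(n-i-j)q+m-1}{m+n-1}$.

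Finally I would assemble the pieces. The key observation is that $N_{q}(m,n;\ol{q-1};\ol{\I})$ is exactly the $i=0$ slice of the target double sum (the $j=0$ term of that slice vanishing since $\binom{n-1}{m+n-1}=0$), so the stated formula is equivalent to the assertion that the remaining terms with $i\ge 1$ reproduce $-N_{q}(m,n;\ol{q-1};\ol{q-1})$. Reindexing $j\mapsto m-j$ recasts this as the symmetric binomial identity $\sum_{i,j}(-1)^{i+j}\binom{n}{i}\binom{m}{j}\binom{(n-i-j)q+m-1}{m+n-1}=\sum_{i,j}(-1)^{i+j}\binom{n}{i}\binom{m}{j}\binom{(m-i-j)q+n-1}{m+n-1}$, which is the new binomial identity promised in the abstract; it holds because $\left(\frac{1-x^q}{1-x}\right)^{m+n}$ is a palindromic polynomial of degree $(m+n)(q-1)$, so its coefficients of $x^{n(q-1)}$ and $x^{m(q-1)}$ agree.

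I expect this last step to be the main obstacle. One is tempted to prove the theorem in a single stroke by writing $M_q=\sum_{S}R_m(S)\,D_n(S)$, where $R_m(S)$ counts row-margin tuples in $\{0,\dots,q-1\}^m$ with sum $S$ and $D_n(S)$ counts column-margin tuples summing to $S$ with some entry at least $q$, and then interchanging the sum over $S$ with the inclusion–exclusion expansions of both factors. The trouble is that the resulting inner convolution $\sum_S\binom{S-jq+m-1}{m-1}\binom{S-iq+n-1}{n-1}$ diverges for each fixed $(i,j)$, the divergences cancelling only once the alternating sums are taken; it is precisely the new binomial identity that makes this rigorous. Whether one routes through it directly or through the palindromy argument above, that identity is the crux, and everything else is a bookkeeping of signs and reindexings, which I would check against small cases such as $(m,n,q)=(2,1,2)$, where $M_q=1$.
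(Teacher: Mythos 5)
Your argument is correct, but it takes a genuinely different route from the paper's. You obtain $M_{q}(m,n;\ol{q-1};\ol{q-1})$ as the difference $N_{q}(m,n;\ol{q-1};\ol{\I})-N_{q}(m,n;\ol{q-1};\ol{q-1})$, evaluate the first term by the anti-diagonal transposition symmetry together with Theorem~\ref{thm:nq}, evaluate the second via Lemma~\ref{lemstairmargs} as the coefficient of $x^{n(q-1)}$ in $(1+x+\cdots+x^{q-1})^{m+n}$, and close the gap with the palindromy of that polynomial, i.e.\ the identity
$$
\sum_{i=0}^{n}\sum_{j=0}^{m}(-1)^{i+j}\binom{n}{i}\binom{m}{j}\binom{(n-i-j)q+m-1}{m+n-1}
=\sum_{i=0}^{n}\sum_{j=0}^{m}(-1)^{i+j}\binom{n}{i}\binom{m}{j}\binom{(m-i-j)q+n-1}{m+n-1}.
$$
Each step is sound (with the convention, used implicitly throughout the paper, that $\binom{N}{m+n-1}=0$ whenever $N<m+n-1$, including negative $N$), and I verified the pieces on small cases. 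The one blemish is in your assembly sentence: the $i=0$ slice of the extended double sum is $-N_{q}(m,n;\ol{q-1};\ol{\I})$, not $+N_{q}(m,n;\ol{q-1};\ol{\I})$, so the theorem is equivalent to the \emph{entire} extended sum equalling $-N_{q}(m,n;\ol{q-1};\ol{q-1})$, which after the reindexing $j\mapsto m-j$ is exactly your palindromy identity; this is the sign bookkeeping you anticipated, and it works out. By contrast, the paper computes $M_q$ directly as $|S_{m,n,q,1}|$ by stratifying monomials according to the number of columns with sum at least $q$, building a bijection between pairs (an $l$-subset of the heavy columns, a monomial) and $P_l(n)\times W$, and inverting the resulting triangular system by binomial inversion --- the same mechanism as in Lemma~\ref{lemWU}. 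The paper's route needs no auxiliary identity but requires the more delicate bijection (dropping the coordinate $c'_s$ for the smallest $s\in L$ to land in an $(m+n-1)$-tuple); yours trades that for the transposition symmetry plus a clean generating-function identity, and incidentally produces a binomial identity of independent interest (though it is not the one advertised in the abstract, which is the identity of Corollary~\ref{corcompid}).
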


\proof
Let $S_{n,m,q}$ be the set of all staircase monomials
$\prod_{i,j} x_{i,j}^{p_{i,j}}$
such that either for all $i = 1, \ldots, m$,
$\sum_j p_{i,j} < q$
or for all $j \in \{1,\ldots,n\}$,
$\sum_{i} p_{i,j} <q$.
By Definition~\ref{defNq},
$N_{q}(m,n;\overline{\I};\overline{\I}) = |S_{n,m,q}|$.

Set $S_{m,n,q,k} :=
\big\lbrace
\prod_{i,j}x_{i,j}^{p_{i,j}} \in S_{m,n,q} :
k \leq |\{j:\sum_{i} p_{i,j} \geq q\}| \big\rbrace$.
Then $M_{q}(m,n;\overline{q-1}; \overline{q-1}) = |S_{m,n,q,1}|$.

For any $\prod_{i,j}x_{i,j}^{p_{i,j}} \in S_{m,n,q}$
let $c_j$ be the $j$th column sum $\sum_i p_{i,j}$
and $r_i$ the $i$th row sum $\sum_j p_{i,j}$.
Let $P_l(n)$ be the set of all $l$-element subsets of $\{1,\ldots, n\}$.
Let $W$ be the number of $(m+n-1)$-tuples
of non-negative integers that sum up to at most $m(q-1) - lq$
and for which the first $m$ entries are strictly smaller than $q$.

In the following we assume that $k > 0$.
If $\prod_{i,j}x_{i,j}^{p_{i,j}} \in S_{m,n,q,k} \setminus S_{m,n,q,k+1}$,
then by the definition of $S_{m,n,q}$,
$r_i \le q-1$ for all $i$.
Let $K \in P_k(n)$
such that $c_j \ge q$ if and only if $j \in K$.
For any $l$-element subset $L$ of $K$,
we associate
$(L, \prod_{i,j}x_{i,j}^{p_{i,j}})$
with $(L,q-1-r_1, \ldots, q-1-r_m, c'_1, \ldots, \widehat{c'_s}, \ldots, c'_n)$,
where $s$ is the smallest element in $L$,
and $c'_j = c_j-q$ if $j \in L$ and $c'_j = c_j$ otherwise.
Recall Lemma~\ref{lemWU}:
this associated element is in
$P_l(n) \times W$
because
all $r_i$ are at most $q-1$
and
\begin{align*}
\sum_{i=1}^m (q-1-r_i) + \sum_{j \not = s} c'_j
&= m(q-1) - \sum_{i=1}^m r_i + \sum_{j \not = s} c_j - (l-1)q \cr
&= m(q-1) - c_s - (l-1)q \cr
&\le m(q-1) - q - (l-1)q \cr
&= m(q-1) - lq.
\end{align*}

Conversely,
we can reverse this.
Namely,
let $(L,v) \in P_l(n) \times W$.
By definition the first $m$ entries in $v$ are non-negative integers
that are at most $q-1$,
and we can write them in the form $q-1-r_i$ for some
$r_i \in \{0, \ldots, q-1\}$.
We write the last $n-1$ entries of $v$
as $(c'_1, \ldots, c'_{s-1}, c'_{s+1}, \ldots, c'_n)$,
where $s$ is the smallest number in $L$.
For $j \in \{1, \ldots, n\} \setminus L$,
let $c_j = c'_j$,
for $j \in L \setminus \{s\}$,
set $c_j = c'_j + q$,
and finally set $c_s = (\sum_i r_i) - (\sum_{j \not = s} c_j)$.
Note that
$$
\sum_i r_i 
= m(q-1) - \sum_i (q-1 - r_i)
= m(q-1) -lq - \sum_i (q-1 - r_i) - \sum_{j \not =s} c'_j + lq + \sum_{j \not =s} c'_j
\ge lq + \sum_{j \not =s} c'_j
= q  + \sum_{j \not =s} c_j.
$$
Thus $c_s \ge q$ and so $c_i \ge q$ for all $i \in L$.
Furthermore,
by Lemma~\ref{lemstairmargs},
these non-negative numbers $r_1, \ldots, r_m, c_1, \ldots, c_n$
uniquely determine a staircase monomial
$\prod_{i,j}x_{i,j}^{p_{i,j}}$ uniquely.
Thus in any case,
$(L,v)$ yields $(L,\prod_{i,j}x_{i,j}^{p_{i,j}})$ uniquely.

We have proved the following:
for every staircase monomial
$\prod_{i,j}x_{i,j}^{p_{i,j}} \in S_{m,n,q,l}$
there exists a unique $k$ such that
$\prod_{i,j}x_{i,j}^{p_{i,j}} \in S_{m,n,q,k} \setminus S_{m,n,q,k+1}$.
There exist $\binom k l$ sets $L \in P_l(n)$
such that $c_j \ge q$ for all $j \in L$.
To $(L, \prod_{i,j}x_{i,j}^{p_{i,j}})$ we uniquely associate an element of
$P_l(n) \times W$.

Thus by Lemma~\ref{lemWU},
for all $l \ge 1$,
\begin{align*}
\sum_{k=l}^n \binom k l |S_{m,n,q,k} \setminus S_{m,n,q,k+1}|
&= |P_l(n) \times W| \cr
&= |P_l(n)| \cdot |W| \cr
&= \binom n l \sum_{j=0}^m (-1)^m
\binom m j \binom{m(q-1)-lq -jq + m + n - 1}{m+n-1} \cr
&= \binom n l \sum_{j=0}^m (-1)^m
\binom m j \binom{(m-j)q-lq + n - 1}{m+n-1} \cr
&= \binom n l \sum_{j=0}^m (-1)^{m-j}
\binom m j \binom{jq-lq + n - 1}{m+n-1}.
\end{align*}
This is a system of $n$ linear equations with matrix form:
$$
\begin{bmatrix}
\binom 1 1 & \binom 2 1 & \binom 3 1 & \cdots & \binom n 1 \cr
\binom 1 2 & \binom 2 2 & \binom 3 2 & \cdots & \binom n 2 \cr
\binom 1 3 & \binom 2 3 & \binom 3 3 & \cdots & \binom n 3 \cr
\vdots & \vdots & \vdots & \ddots & \vdots \cr
\binom 1 n & \binom 2 n & \binom 3 n & \cdots & \binom n n \cr
\end{bmatrix}
\begin{bmatrix}
\vphantom{\binom 0 0} |S_{m,n,q,1} \setminus S_{m,n,q,2}| \cr
\vphantom{\binom 0 0} |S_{m,n,q,2} \setminus S_{m,n,q,3}| \cr
\vphantom{\binom 0 0} |S_{m,n,q,3} \setminus S_{m,n,q,4}| \cr
\vdots \cr
\vphantom{\binom 0 0} |S_{m,n,q,n} \setminus S_{m,n,q,n+1}| \cr
\end{bmatrix}
=
\begin{bmatrix}
\binom n 1 \sum_{j=0}^m (-1)^{m-j} \binom m j \binom {jq - 1q + n - 1}{m+n-1} \cr
\binom n 2 \sum_{j=0}^m (-1)^{m-j} \binom m j \binom {jq - 2q + n - 1}{m+n-1} \cr
\binom n 3 \sum_{j=0}^m (-1)^{m-j} \binom m j \binom {jq - 3q + n - 1}{m+n-1} \cr
\vdots \cr
\binom n n \sum_{j=0}^m (-1)^{m-j} \binom m j \binom {jq - nq + n - 1}{m+n-1} \cr
\end{bmatrix}.
$$
But the inverse of this matrix $[\binom{j}{i}]_{i,j}$
is $[(-1)^{i+j}\binom{j}{i}]_{i,j}$,
so that by Cramer's rule,
\begin{align*}
M_{q}(m,n;\overline{q-1}; \overline{q-1})
&= |S_{m,n,q,1}| \cr
&= \sum_{k=1}^n |S_{m,n,q,k} \setminus S_{m,n,q,k+1}| \cr
&= \sum_{k=1}^n 
\sum_{i=1}^n
(-1)^{k+i} \binom i k
\binom n i
\sum_{j=0}^m (-1)^{m-j} \binom m j \binom{jq-iq + n - 1}{m+n-1} \cr
&= \sum_{i=1}^n 
(-1)^i \sum_{k=1}^n
(-1)^k \binom i k
\binom n i
\sum_{j=0}^m (-1)^{m-j} \binom m j \binom{jq-iq + n - 1}{m+n-1} \cr
&= \sum_{i=1}^n 
(-1)^{i-1}
\binom n i
\sum_{j=0}^m (-1)^{m-j} \binom m j \binom{jq-iq + n - 1}{m+n-1} \cr
&= \sum_{i=1}^n \sum_{j=0}^m (-1)^{m-j + i-1}
\binom n i \binom m j \binom{jq-iq + n - 1}{m+n-1} \cr
&= \sum_{i=1}^n \sum_{j=1}^m (-1)^{m-j + i-1}
\binom n i \binom m j \binom{jq-iq + n - 1}{m+n-1}. & \qed
\end{align*}

The main theorem on the generalized Hilbert-Kunz function now follows:

\begin{thm} \label{thmNinfinf}
For all positive $m, n$,
$\mbox{HK}_{R,X}(q)
= \length \displaystyle\left( \frac {k[X]} {I_{2}(X) + (x_{1,1}^q, \ldots, x_{m,n}^q) }\right)$
equals
\begin{align*}
N_{q}(m,n; \ol{\infty}; \ol{\infty})
&= N_{q}(m,n; \ol{\infty}; \ol{q-1}) + M_{q}(m,n;\ol{q-1};\ol{q-1}) \cr
&= \sum_{i=1}^{n} (-1)^{n-i} \binom {n} {i} \binom {iq +m-1} {m+n-1}
+
\sum_{i=1}^n \sum_{j=1}^m (-1)^{m-j + i-1}
\binom n i \binom m j \binom{jq-iq + n - 1}{m+n-1}.
\end{align*}
\end{thm}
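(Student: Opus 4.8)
The plan is to realize the Hilbert-Kunz length as the cardinality of an explicit set of staircase monomials and then split that set into the two families already enumerated in Theorem~\ref{thm:nq} and Theorem~\ref{thm:mq}; the theorem then reduces to a disjoint-union count.

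First I would invoke Theorem~\ref{thmvsbasis} together with the length identity recorded just after Definition~\ref{defNq} to conclude that $\mbox{HK}_{R,X}(q)$ equals $N_{q}(m,n;\overline{\infty};\overline{\infty})$, i.e.\ the number of staircase monomials $\prod_{i,j} x_{i,j}^{p_{i,j}}$ for which \emph{either} every row sum $\sum_j p_{i,j}$ is $<q$ \emph{or} every column sum $\sum_i p_{i,j}$ is $<q$. Denote this set by $S$.

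Next I would partition $S$ according to whether all column sums are $<q$. Let $A$ be the staircase monomials of $S$ all of whose column sums are $<q$; by Theorem~\ref{thm:nq} one has $|A| = N_{q}(m,n;\overline{\infty};\overline{q-1})$. Let $B = S \setminus A$, consisting of those staircase monomials in $S$ for which some column sum is $\ge q$. Since a monomial of $B$ lies in $S$ but fails the ``all column sums $<q$'' alternative, it must satisfy the other alternative, so all its row sums are $<q$; conversely ``some column sum $\ge q$'' is precisely the condition that removes it from $A$. Matching this against Definition~\ref{defMq}, in which $\min\{r_i,q-1\}=q-1$ and $c_j=q-1$, shows $|B| = M_{q}(m,n;\overline{q-1};\overline{q-1})$. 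As $A$ and $B$ are disjoint with $A\cup B = S$, we obtain $N_{q}(m,n;\overline{\infty};\overline{\infty}) = N_{q}(m,n;\overline{\infty};\overline{q-1}) + M_{q}(m,n;\overline{q-1};\overline{q-1})$, and substituting the closed forms from Theorems~\ref{thm:nq} and~\ref{thm:mq} yields the displayed expression.

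There is essentially no analytic difficulty here: all the work has been done in the preceding theorems, and the remaining content is the bookkeeping that the two previously-counted families tile $S$, each monomial exactly once. The one point that demands care is verifying that the set $B$ matches Definition~\ref{defMq} verbatim---specifically, that membership in $S$ combined with ``not all column sums $<q$'' forces every row sum to be $<q$, so that $B$ is governed by the row bound $q-1$ and the existence of a column whose sum exceeds $q-1$, and not by some larger collection of monomials.
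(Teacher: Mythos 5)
Your proposal is correct and follows essentially the same route as the paper: identify $N_q(m,n;\ol{\infty};\ol{\infty})$ with the set of staircase monomials satisfying the row-or-column condition, split it according to whether all column sums are $<q$, and observe that the two pieces are exactly the sets counted by $N_q(m,n;\ol{\infty};\ol{q-1})$ and $M_q(m,n;\ol{q-1};\ol{q-1})$, whose closed forms come from Theorems~\ref{thm:nq} and~\ref{thm:mq}. Your extra care in checking that the complementary piece matches Definition~\ref{defMq} verbatim is a welcome addition but does not change the argument.
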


\begin{proof}
By Definition~\ref{defNq},
$N_{q}(m,n; \ol{\infty}; \ol{\infty})$
counts all the staircase monomials
$\prod_{i,j} x_{i,j}^{p_{i,j}}$
with the property that either
for all $j$, $\sum_i p_{i,j} < q$,
or for all $i$, $\sum_j p_{i,j} < q$.

The number $N_{q}(m,n; \ol{\infty}; \ol{q-1})$
counts those monomials in the previous paragraph
for which for all $j$, $\sum_i p_{i,j} < q$,
and $M_{q}(m,n;\ol{q-1};\ol{q-1})$ counts those monomials
for which for some $j$, $\sum_i p_{i,j} \ge q$.
Thus
$N_{q}(m,n; \ol{\infty}; \ol{\infty})
= N_{q}(m,n; \ol{\infty}; \ol{q-1}) + M_{q}(m,n;\ol{q-1};\ol{q-1})$,
and by Theorems~\ref{thm:nq} and \ref{thm:mq} this equals to
the claimed sums of binomial coefficients.
\end{proof}

In particular,
comparison with Theorem 4.4 in \cite{MS} when $m = 2$
gives:

\begin{cor}\label{corcompid}
The number of $(n+1)$-tuples of non-negative integers
that sum up to at most $n(q-1)$ and for which the first $n$ entries
are strictly smaller than $q$ equals
$$
\sum_{i=1}^{n} (-1)^{n-i} \binom {n} {i} \binom {iq +1} {n+1}
= {nq^{n+1} - (n-2)q^n \over 2}.
$$
\end{cor}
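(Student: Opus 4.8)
The plan is to read the corollary as the equality of two independent evaluations of a single combinatorial quantity. First I would specialize Theorem~\ref{thm:nq} to $m = 2$: there the object counted is precisely the set of $(m+n-1) = (n+1)$-tuples of non-negative integers summing to at most $n(q-1)$ whose first $n$ entries are strictly below $q$, and the theorem already identifies this count with $\sum_{i=1}^{n} (-1)^{n-i}\binom{n}{i}\binom{iq+m-1}{m+n-1}$, which becomes $\sum_{i=1}^{n} (-1)^{n-i}\binom{n}{i}\binom{iq+1}{n+1}$ upon substituting $m=2$. This delivers the first equality of the statement for free. It then remains to recognize this same number as the length $N_q(2,n;\ol{\infty};\ol{q-1})$ and to compare it against the closed polynomial form for the $m=2$ case recorded in \cite[Theorem 4.4]{MS}, which evaluates it as $(nq^{n+1}-(n-2)q^n)/2$. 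Equating the two expressions for the one integer yields the asserted identity.

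Since the corollary is advertised as a new binomial identity, I would also give a self-contained verification of the second equality that does not route through \cite{MS}. The key observation is that the $i=0$ term $(-1)^n\binom{n}{0}\binom{1}{n+1}$ vanishes for every $n\ge 1$, so the sum equals $\sum_{i=0}^{n}(-1)^{n-i}\binom{n}{i}P(i)$ with $P(i)=\binom{iq+1}{n+1}$; this is exactly the $n$th finite difference $\Delta^n P$ evaluated at $0$. Viewing $q$ as a fixed parameter, $P(i)$ is a polynomial of degree $n+1$ in $i$, and because $\Delta^n$ annihilates every monomial $i^k$ with $k<n$, only the coefficients of $i^{n+1}$ and of $i^n$ survive.

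To extract those two coefficients I would write $\binom{iq+1}{n+1}=\frac{1}{(n+1)!}\prod_{t=0}^{n}(iq+1-t)$ and read the product as a polynomial in $x=iq$ with roots $-1,0,1,\dots,n-1$. The coefficient of $i^{n+1}$ is then $q^{n+1}/(n+1)!$, while that of $i^n$ is $-\bigl(\sum_{\mathrm{roots}}\bigr)q^n/(n+1)! = -\tfrac{(n-2)(n+1)}{2}\,q^n/(n+1)!$, using $-1+\binom{n}{2}=\tfrac{(n-2)(n+1)}{2}$. Finally I would apply the standard evaluation $\sum_{j=0}^{n}(-1)^{n-j}\binom{n}{j}j^{k}=n!\,S(k,n)$ in terms of Stirling numbers of the second kind, with $S(n,n)=1$ and $S(n+1,n)=\binom{n+1}{2}$. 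The two surviving terms then give $\tfrac{q^{n+1}}{n+1}\binom{n+1}{2}-\tfrac{(n-2)q^n}{2}$, which collapses to $\tfrac{1}{2}\bigl(nq^{n+1}-(n-2)q^n\bigr)$, as claimed.

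The main obstacle is bookkeeping rather than conceptual. In the comparison route, the one point demanding care is to confirm that \cite[Theorem 4.4]{MS} computes exactly $N_q(2,n;\ol{\infty};\ol{q-1})$ with matching row and column constraints, so that the two formulas genuinely describe the same integer. In the self-contained route, the only delicate step is the correct isolation of the subleading coefficient of $\binom{iq+1}{n+1}$ (equivalently, the sum of the roots $-1,0,\dots,n-1$) together with the value $S(n+1,n)=\binom{n+1}{2}$; once the lower-order monomials are discarded, the remaining algebra is forced.
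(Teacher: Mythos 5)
Your proposal contains two routes, and they fare very differently. The comparison route rests on a misreading of \cite[Theorem 4.4]{MS}: that theorem computes $N_q(2,n;\ol{\infty};\ol{\infty})$, not $N_q(2,n;\ol{\infty};\ol{q-1})$, and its value is ${nq^{n+1}-(n-2)q^n \over 2} + n\binom{q+n-1}{n+1}$, with an extra term. This is precisely the point you flagged as ``demanding care,'' and as set up it does fail. The paper's own proof is the repaired version of this route: it writes $N_q(2,n;\ol{\infty};\ol{\infty}) = N_q(2,n;\ol{\infty};\ol{q-1}) + M_q(2,n;\ol{q-1};\ol{q-1})$ via Theorem~\ref{thmNinfinf}, observes that for $m=2$ every term of the $M_q$ double sum except $(i,j)=(1,2)$ vanishes because the tops of $\binom{jq-iq+n-1}{n+1}$ fall below $n+1$, so that $M_q(2,n;\ol{q-1};\ol{q-1}) = n\binom{q+n-1}{n+1}$, and then cancels this against the identical term in \cite[Theorem 4.4]{MS}. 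The first equality of the corollary (count of tuples equals the alternating sum) you obtain exactly as the paper does, by setting $m=2$ in Theorem~\ref{thm:nq}.

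Your self-contained route, by contrast, is correct and is a genuinely different proof of the identity $\sum_{i=1}^{n}(-1)^{n-i}\binom{n}{i}\binom{iq+1}{n+1} = \frac{nq^{n+1}-(n-2)q^n}{2}$ --- notably, the paper remarks after the corollary that it knows of no proof of this equality other than the comparison with \cite{MS}, so your argument supplies one. The steps all check: the $i=0$ term vanishes since $\binom{1}{n+1}=0$; for every non-negative integer $N$ one has $\binom{N}{n+1}=\frac{1}{(n+1)!}\prod_{t=0}^{n}(N-t)$, so $P(i)=\binom{iq+1}{n+1}$ is honestly a degree-$(n+1)$ polynomial in $i$ agreeing with the combinatorial values; the roots in $x=iq$ are $-1,0,\dots,n-1$ with sum $\binom{n}{2}-1=\frac{(n+1)(n-2)}{2}$, giving $a_{n+1}=\frac{q^{n+1}}{(n+1)!}$ and $a_n=-\frac{(n+1)(n-2)}{2}\cdot\frac{q^n}{(n+1)!}$; and $\sum_{j}(-1)^{n-j}\binom{n}{j}j^k=n!\,S(k,n)$ with $S(n,n)=1$, $S(n{+}1,n)=\binom{n+1}{2}$ kills everything else and yields $\frac{n}{2}q^{n+1}-\frac{n-2}{2}q^n$. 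What the paper's approach buys is that it needs no machinery beyond results already proved in the paper and \cite{MS}; what yours buys is a direct, purely combinatorial-algebraic verification of the binomial identity that is independent of \cite{MS} and answers, at least for this identity, the difficulty the authors raise (their objection that the summands depend on $n$ as well as $i$ is circumvented because the finite-difference operator $\Delta^n$ is applied for each fixed $n$ separately). If you keep the comparison route at all, you must insert the $M_q$ correction term; otherwise, discard it and present the finite-difference argument together with the $m=2$ specialization of Theorem~\ref{thm:nq}.
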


\begin{proof}
According to \cite[Theorem 4.4]{MS},
$N_{q}(2,n; \ol{\infty}; \ol{\infty})
= {nq^{n+1} - (n-2)q^n \over 2} + n \binom{q + n - 1}{n+1}$,
and by Theorem~\ref{thmNinfinf},
\begin{align*}
N_{q}(2,n; \ol{\infty}; \ol{\infty})
&= \sum_{i=1}^{n} (-1)^{n-i} \binom {n} {i} \binom {iq +1} {n+1}
\cr&\hskip2em
+ 2 \sum_{i=1}^n (-1)^i \binom n i \binom{q-iq + n - 1}{n+1}
+ \sum_{i=1}^n (-1)^{i-1} \binom n i \binom{2q-iq + n - 1}{n+1} \cr
&= \sum_{i=1}^{n} (-1)^{n-i} \binom {n} {i} \binom {iq +1} {n+1}
+ \binom n 1 \binom{2q-1q + n - 1}{n+1} \cr
&= \sum_{i=1}^{n} (-1)^{n-i} \binom {n} {i} \binom {iq +1} {n+1}
+ n \binom{q + n - 1}{n+1}.
\end{align*}
Thus
$\sum_{i=1}^{n} (-1)^{n-i} \binom {n} {i} \binom {iq +1} {n+1}
= {nq^{n+1} - (n-2)q^n \over 2}$.
By Theorem~\ref{thm:nq},
this number is
the number of $(n+1)$-tuples of non-negative integers
that sum up to at most $n(q-1)$ and for which the first $n$ entries
are strictly smaller than $q$.
\end{proof}

We remark here that we know of no other proof of the equality
in the last corollary.
Natural first attempts would be induction
and Gosper's algorithm,
and neither of these is successful,
as for one thing, the summands depend not only on the summing index $i$
but also on $n$.
The challenge remains to establish a closed-form expression
for $N_q(m,n; \overline \infty; \overline \infty)$ and
$N_q(m,n; \ol \infty; \ol {q-1})$ for higher $m$.

\end{document}